\journal{}
\newcommand{\eps}{\varepsilon}
\newcommand{\epsb}{\varepsilon_\mathrm{b}}
\newcommand{\mub}{\mu_\mathrm{b}}
\newcommand{\sigmab}{\sigma_\mathrm{b}}
\newcommand{\set}[1]{\left\{#1\right\}}
\newcommand{\mD}{\mathrm{D}}
\newcommand{\mE}{\mathbf{E}}
\newcommand{\mH}{\mathbf{H}}
\newcommand{\p}{\partial}
\newcommand{\ma}{\mathbf{a}}
\newcommand{\mf}{\mathbf{f}}
\newcommand{\mg}{\mathbf{g}}
\newcommand{\mr}{\mathbf{r}}
\newcommand{\vt}{\boldsymbol{\theta}}
\theoremstyle{plain}
\newtheorem{thm}{Theorem}[section]
\theoremstyle{remark}
\newtheorem{rem}{Remark}[section]
\newtheorem{ex}{Example}[section]
\begin{document}

\begin{frontmatter}



\title{Direct sampling method for imaging small anomalies: real-data experiments}

\author{Won-Kwang Park\corref{corPark}}
\ead{parkwk@kookmin.ac.kr}
\address{Department of Information Security, Cryptology, and Mathematics, Kookmin University, Seoul, 02707, Korea.}
\cortext[corPark]{First and corresponding author}
\author{Kwang-Jae Lee}
\ead{reolee0122@etri.re.kr}
\author{Seong-Ho Son\corref{corSon}}
\ead{shs@etri.re.kr}
\address{Radio Environment \& Monitoring Research Group, Electronics and Telecommunications Research Institute, Daejeon, 34129, Korea}
\cortext[corSon]{Co-corresponding author}

\begin{abstract}
In this paper, a direct sampling method (DSM) is designed for a real-time detection of small anomalies from scattering parameters measured by a small number of dipole antennas. Applicability of the DSM is theoretically demonstrated by proving that its indicator function can be represented in terms of an infinite series of Bessel functions of integer order, Hankel function of order zero, and the antenna configurations. Experiments using real-data then demonstrate both the effectiveness and limitations of this method.
\end{abstract}

\begin{keyword}
Direct sampling method \sep scattering parameter \sep Bessel and Hankel functions \sep Real-data experiments


\end{keyword}

\end{frontmatter}




\section{Introduction}\label{sec:1}
Several studies have revealed that the direct sampling method (DSM) is a fast, stable, and effective imaging technique in inverse scattering problems. It has been investigated for imaging small two-dimensional anomalies \cite{IJZ1,LZ,KLP,P-DSM1} and retrieving three-dimensional objects \cite{IJZ2} for diffusive \cite{CILZ} and electrical impedance \cite{CIZ} tomography applications and for source detection in stratified ocean waveguides \cite{LXZ}. As we have previously observed, DSM only requires one or at most a few incident fields, rather than additional operations, such as singular value decomposition \cite{AILP,PKLS} or solving ill-posed integral equations \cite{KR} or adjoint problems \cite{LR,NS}.

Recent study \cite{P-DSM2} has shown that the DSM is an effective technique for microwave imaging; however, their simulations differ in crucial ways from real-world scenarios: for example, it is difficult to measure scattering parameters when the transmitting and receiving antennas are in the same location. An extended study that considered a more realistic scenario is required and experiments using real-data should be conducted to show that this method could feasibly be used for real-world applications. Therefore, an appropriate DSM is designed herein, and its mathematical structure is analyzed to demonstrate its real-world applicability.

This study is an extension of a previous study \cite{P-DSM2} on analyzing the DSM's structure to a real-world application. Herein, the DSM is designed to handle a real-world problem, and the mathematical structure of its indicator function is analyzed by expressing it in terms of an infinite series of Bessel functions of integer order. Finally, the imaging performance of designed DSM and related factors are discussed.

This paper is organized as follows. Section \ref{sec:2} briefly discusses the scattering parameters in the presence of a set of anomalies and designing of an indicator function of DSM. Section \ref{sec:3} investigates the structure of this indicator function by expressing it in terms of an infinite series of Bessel functions of the first kind of integer order, Hankel function of order zero, and the antenna configuration. With this, we discuss the factors that influence its imaging performance. Section \ref{sec:4} contains experimental results using real-data for multiple anomalies for supporting theoretical result. Section \ref{sec:5} presents our conclusions and future works.

\section{Scattering parameters and direct sampling method}\label{sec:2}
This section briefly introduces the scattering parameters and discusses the design of a DSM indicator function. Here, we assume that multiple small anomalies $\mD_m$ with smooth boundaries $\p\mD_m$, $m=1,2,\cdots,M$, exist in a homogeneous background medium. For simplicity, the $\mD_m$ are assumed to be small balls of radius $\rho_m$ located at $\mr_m$ and denote the collection of all $\mD_m$ by $\mD$. All the anomalies are contained in a homogeneous domain $\Omega$ and are surrounded by $N$ different dipole antennas $\ma_n$, $n=1,2,\cdots,N$, located on a circle of radius $R$. We denote the set of all antennas by $\mathrm{A}$.

Throughout this study, the materials and anomalies are characterized by their dielectric permittivity and electrical conductivity at a given angular frequency $\omega=2\pi f$. The magnetic permeability is set to be a constant $\mu(\mr)\equiv\mu=4\cdot10^{-7}\pi$ for $\mr\in\Omega$ and the permittivity (conductivity) of the background and $\mD_m$ are denoted by $\eps_\mathrm{b}$ ($\sigma_\mathrm{b}$) and $\eps_m$ ($\sigma_m$), respectively. In addition,  the wavenumber $k$ is given by $\omega^2\mu_\mathrm{b}(\eps_\mathrm{b}+i\sigma_\mathrm{b}/\omega)$.

The scattering parameters ($S-$parameters) $\mathrm{S}(n,n')$ are defined as the ratios of the output voltages (or reflected waves) at the $\ma_n$ to the input voltages (or incident waves) at $\ma_{n'}$. The measured data is the scattered field $\mathrm{S}_{\mathrm{scat}}(n, n')$, which is obtained by subtracting the $S-$parameters with and without anomalies, respectively. Then, based on \cite{HSM2}, $\mathrm{S}_{\mathrm{scat}}(n,n')$ is given by
\begin{equation}\label{ScatteringParameter}
  \mathrm{S}_{\mathrm{scat}}(n,n')=\frac{ik^2}{4\omega\mub}\int_{\Omega}\left(\frac{\eps(\mr)-\epsb}{\epsb}+i\frac{\sigma(\mr)-\sigmab}{\omega\sigmab}\right)\mE_{\mathrm{inc}}(\ma_{n'},\mr)\cdot\mE_{\mathrm{tot}}(\mr,\ma_{n})d\mr,
\end{equation}
where $\mE_{\mathrm{inc}}(\ma_{n'},\mr)$ is the incident electric field in a homogeneous medium due to the point current density at $\ma_{n'}$, which satisfies
\[\nabla\times\mE_{\mathrm{inc}}(\ma_{n'},\mr)=-i\omega\mu\mH(\ma_{n'},\mr))\quad\mbox{and}\quad\nabla\times\mH(\ma_{n'},\mr)=(\sigma_\mathrm{b}+i\omega\eps_\mathrm{b})\mE_{\mbox{\tiny inc}}(\ma_{n'},\mr),\]
and $\mE_{\mathrm{tot}}(\mr,\ma_{n})$ be the total field owing to the presence of the anomalies $\mD$ measured at $\ma_n$, which satisfies
\[\nabla\times\mE_{\mathrm{tot}}(\mr,\ma_{n})=-i\omega\mu\mH(\mr,\ma_{n})\quad\mbox{and}\quad\nabla\times\mH(\mr,\ma_{n})=(\sigma(\mr)+i\omega\eps(\mr))\mE_{\mathrm{tot}}(\mr,\ma_{n}),\]
with transmission conditions at the boundaries $\p\mD$. Here, we assume a time-harmonic dependence $e^{i\omega t}$, and $\mH$ denotes the magnetic field.

It is worth noting that, in general real-world experiments, it is impossible to measure $\mathrm{S}_{\mathrm{scat}}(n,n')$ when $n=n'$ because antenna $\ma_{n'}$ is used for signal transmission and the remaining $N-1$ antennas $\ma_n$, $n=1,2,\cdots,N$ with $n\ne n'$ are used for signal reception. Thus, the indicator function introduced in \cite{IJZ1,LZ,P-DSM1,P-DSM2} cannot be applied for imaging of $\mD$.

Motivated by the above, we design an indicator function of DSM for imaging $\mD$ by finding their positions $\mr_m$ based on the measured data $\mathrm{S}_{\mathrm{scat}}(n,n')$ for fixed $n'$. Since the anomalies are assumed to be small, applying the Born approximation to \eqref{ScatteringParameter} yields
\begin{equation}\label{Approximation}
\mathrm{S}_{\mathrm{scat}}(n,n')=\frac{ik^2}{4\omega\mub}\sum_{m=1}^{M}\rho_m^3\chi(\mr_m)\mE_{\mathrm{inc}}(\ma_{n'},\mr_m)\cdot\mE_{\mathrm{inc}}(\mr_m,\ma_{n})+o(\rho_m^3),\quad \chi(\mr_m)=\frac{\eps_m-\epsb}{\epsb}+i\frac{\sigma_m-\sigmab}{\omega\sigmab}.
\end{equation}
Based on this representation, we introduce the indicator function of DSM as follows. For $\mr\in\Omega$,
\begin{equation}\label{IndicatorFunction}
\mathfrak{F}_{\mathrm{DSM}}(\mr,n')=\frac{|\langle \mathrm{S}_{\mathrm{scat}}(n,n'),\mE_{\mathrm{inc}}(\mr,\ma_{n})\rangle_{L^2(\mathrm{A})}|}{||\mathrm{S}_{\mathrm{scat}}(n,n')||_{L^2(\mathrm{A})}||\mE_{\mathrm{inc}}(\mr,\ma_{n})||_{L^2(\mathrm{A})}},
\end{equation}
where
\[\langle \mf(\ma_n),\mg(\ma_n)\rangle_{L^2(\mathrm{A})}=\sum_{n=1,n\ne n'}^{N}\mf(\ma_n)\overline{\mg(\ma_n)}\quad\mbox{and}\quad||\mf(\ma_n)||_{L^2(\mathrm{A})}=(\langle \mf(\ma_n),\mf(\ma_n)\rangle_{L^2(\mathrm{A})})^{1/2}.\]

Notice that the definition of $\mathfrak{F}_{\mathrm{DSM}}(\mr,n')$ is different from the traditional indicator function of DSM. Nevertheless, the maps of $\mathfrak{F}_{\mathrm{DSM}}(\mr,n')$ shown later contain large peaks at $\mr\approx\mr_m$ and small values at $\mr\ne\mr_m$; therefore, the locations of the $\mD_m$ can be identified via this DSM.

\section{Theoretical result}\label{sec:3}
This section investigates the mathematical structure of $\mathfrak{F}_{\mathrm{DSM}}(\mr,n')$ by proving it can be written in terms of Bessel functions of integer order. It should be noted that, since the omnidirectional antennas are arranged perpendicular to the $z-$axis, the mathematical treatment of time-harmonic electromagnetic waves scattered from thin, infinitely-long cylindrical obstacles, (e.g., \cite{PKLS}) indicating that this problem can be regarded as two-dimensional, and the incident field can be expressed as $
\mE_{\mathrm{inc}}(\mr,\ma_{n})=-iH_0^{(1)}(k|\mr-\ma_{n}|)/4$, where $H_0^{(1)}$ is the Hankel function of order zero. Based on this, the following result can be derived.

\begin{thm}[Mathematical structure of the indicator function]
Assume that the total number of antennas, $N$, is small. Let $|\ma_n|=R$, $\vt_n:=\ma_n/|\ma_n|=(\cos\theta_n,\sin\theta_n)$, and $\mr-\mr_m=|\mr-\mr_m|(\cos\phi_m\sin\phi_m)$. If $k|\mr-\ma_n|\gg0.25$ for $\mr\in\Omega$, then $\mathfrak{F}_{\mathrm{DSM}}(\mr,n')$ can be represented as follows:
\begin{equation}\label{StructureDSM}
\mathfrak{F}_{\mathrm{DSM}}(\mr,n')=\frac{|\Phi(\mr)|}{\displaystyle\max_{\mr\in\Omega}|\Phi(\mr)|},
\end{equation}
where
\[\Phi(\mr)\approx\sum_{m=1}^{M}\rho_m^3\chi(\mr_m)H_0^{(1)}(k|\mr_m-\ma_{n'}|)\left(J_0(k|\mr-\mr_m|)+\frac{1}{N-1}\sum_{n=1,n\ne n'}^{N}\sum_{s\in\overline{\mathbb{Z}}_0}i^s J_s(k|\mr-\mr_m|)e^{is(\theta_n-\phi_m)}\right).\]
Here, $\overline{\mathbb{Z}}_0=\mathbb{Z}\cup\set{-\infty,+\infty}\backslash\set{0}$ and $\mathbb{Z}$ denotes the set of integer number.
\end{thm}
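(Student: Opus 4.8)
The plan is to substitute the Born-approximation representation \eqref{Approximation} of $\mathrm{S}_{\mathrm{scat}}(n,n')$ and the scalar relation $\mE_{\mathrm{inc}}(\mr,\ma_n)=-\tfrac{i}{4}H_0^{(1)}(k|\mr-\ma_n|)$ into the numerator of \eqref{IndicatorFunction}, pull out every factor that does not depend on $\mr$, and then recast the remaining sum over the receiving antennas as the stated Bessel series. After substitution the numerator takes the form
\[
\bigl|\langle\mathrm{S}_{\mathrm{scat}}(n,n'),\mE_{\mathrm{inc}}(\mr,\ma_n)\rangle_{L^2(\mathrm{A})}\bigr|\approx|C_1|\,\Bigl|\sum_{m=1}^{M}\rho_m^3\chi(\mr_m)H_0^{(1)}(k|\mr_m-\ma_{n'}|)\sum_{n=1,n\ne n'}^{N}H_0^{(1)}(k|\mr_m-\ma_n|)\overline{H_0^{(1)}(k|\mr-\ma_n|)}\Bigr|,
\]
where $C_1$ collects the prefactor $\tfrac{ik^2}{4\omega\mub}$ and the three constants $-\tfrac{i}{4}$, and is independent of $\mr$; so the heart of the matter is the inner sum over $n$.

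Next I would invoke the large-argument behaviour $H_0^{(1)}(t)\approx\sqrt{2/(\pi t)}\,e^{i(t-\pi/4)}$, which is exactly what the hypothesis $k|\mr-\ma_n|\gg0.25$ secures. In the product $H_0^{(1)}(k|\mr_m-\ma_n|)\overline{H_0^{(1)}(k|\mr-\ma_n|)}$ the phases $e^{\mp i\pi/4}$ cancel; and because $\mr,\mr_m$ lie in the bounded region $\Omega$ inside the circle $|\ma_n|=R$, the distances linearise as $|\mr-\ma_n|\approx R-\vt_n\cdot\mr$ and $|\mr_m-\ma_n|\approx R-\vt_n\cdot\mr_m$, so the two amplitudes reduce to the $\mr$-independent constant $2/(\pi kR)$, the factors $e^{\pm ikR}$ cancel, and what remains is $\tfrac{2}{\pi kR}\,e^{ik\vt_n\cdot(\mr-\mr_m)}$ with $\vt_n\cdot(\mr-\mr_m)=|\mr-\mr_m|\cos(\theta_n-\phi_m)$.

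Then I would apply the Jacobi--Anger expansion $e^{iz\cos\alpha}=\sum_{s\in\mathbb{Z}}i^sJ_s(z)e^{is\alpha}$ with $z=k|\mr-\mr_m|$ and $\alpha=\theta_n-\phi_m$, sum over the $N-1$ indices $n\ne n'$, factor out $N-1$, and peel off the $s=0$ term, obtaining
\[
\sum_{n=1,n\ne n'}^{N}H_0^{(1)}(k|\mr_m-\ma_n|)\overline{H_0^{(1)}(k|\mr-\ma_n|)}\approx\frac{2(N-1)}{\pi kR}\Bigl(J_0(k|\mr-\mr_m|)+\frac{1}{N-1}\sum_{n=1,n\ne n'}^{N}\sum_{s\in\overline{\mathbb{Z}}_0}i^sJ_s(k|\mr-\mr_m|)e^{is(\theta_n-\phi_m)}\Bigr).
\]
Consequently the numerator equals $C_2\,|\Phi(\mr)|$ with $\Phi$ exactly as in the statement and $C_2$ a positive constant independent of $\mr$. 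For the denominator, $\|\mathrm{S}_{\mathrm{scat}}(n,n')\|_{L^2(\mathrm{A})}$ carries no $\mr$-dependence, while the same asymptotics give $\|\mE_{\mathrm{inc}}(\mr,\ma_n)\|_{L^2(\mathrm{A})}\approx\sqrt{(N-1)/(8\pi kR)}$, which is $\mr$-independent to leading order; hence $\mathfrak{F}_{\mathrm{DSM}}(\mr,n')\approx\gamma\,|\Phi(\mr)|$ with $\gamma$ a positive constant. The Cauchy--Schwarz inequality gives $\mathfrak{F}_{\mathrm{DSM}}(\mr,n')\le1$, with equality attained at an anomaly location --- exactly when $M=1$, where $\mathrm{S}_{\mathrm{scat}}(\cdot,n')$ and $\mE_{\mathrm{inc}}(\mr_1,\cdot)$ are proportional, and approximately when $M>1$ --- so $\max_{\mr\in\Omega}\mathfrak{F}_{\mathrm{DSM}}(\mr,n')=1$, which forces $\gamma=1/\max_{\mr\in\Omega}|\Phi(\mr)|$ and produces \eqref{StructureDSM}.

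The step I expect to be the main obstacle is making this chain of asymptotics uniform in $\mr\in\Omega$: in particular justifying the phase linearisation $|\mr_m-\ma_n|-|\mr-\ma_n|\approx\vt_n\cdot(\mr-\mr_m)$, which tacitly needs $R$ large compared with $\mathrm{diam}\,\Omega$, and verifying that every amplitude prefactor --- both in the numerator and, just as importantly, in $\|\mE_{\mathrm{inc}}(\mr,\ma_n)\|_{L^2(\mathrm{A})}$ --- really does shed its $\mr$-dependence at leading order, so that the spatial profile of $\mathfrak{F}_{\mathrm{DSM}}$ is carried entirely by $|\Phi(\mr)|$. I would also note that the hypothesis that $N$ is small plays no active role in the derivation; it only isolates the regime where the residual sum over $s\in\overline{\mathbb{Z}}_0$ does not wash out, since for large $N$ the discrete average $\tfrac1{N-1}\sum_{n\ne n'}e^{is(\theta_n-\phi_m)}$ tends to $\tfrac1{2\pi}\int_0^{2\pi}e^{is(\theta-\phi_m)}\,d\theta=0$ for $s\ne0$, so that $\Phi(\mr)$ collapses to $\sum_{m=1}^{M}\rho_m^3\chi(\mr_m)H_0^{(1)}(k|\mr_m-\ma_{n'}|)J_0(k|\mr-\mr_m|)$, the ideal DSM profile peaked at $\mr=\mr_m$.
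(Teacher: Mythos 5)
Your proposal is correct and follows essentially the same route as the paper: Born approximation of $\mathrm{S}_{\mathrm{scat}}$, the far-field asymptotic of $H_0^{(1)}$ to reduce the receiver sum to $\sum_{n\ne n'}e^{ik\vt_n\cdot(\mr-\mr_m)}$, the Jacobi--Anger expansion to extract the $J_0$ term plus the residual series, and H{\"o}lder/Cauchy--Schwarz to handle the normalization. Your added remarks on the $\mr$-independence of the denominator and on why small $N$ matters are more explicit than the paper's brief final step, but they elaborate the same argument rather than constituting a different one.
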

\begin{proof}
From \eqref{Approximation}, we can observe that
\begin{multline}\label{Term1}
\langle \mathrm{S}_{\mathrm{scat}}(n,n'),\mE_{\mathrm{inc}}(\mr,\ma_{n})\rangle_{L^2(\mathrm{A})}=\frac{ik^2}{4\omega\mub}\sum_{n=1,n\ne n'}^{N}\sum_{m=1}^{M}\rho_m^3\chi(\mr_m)\mE_{\mathrm{inc}}(\ma_{n'},\mr_m)\cdot\mE_{\mathrm{inc}}(\mr_m,\ma_{n})\overline{\mE_{\mathrm{inc}}(\mr,\ma_{n})}+o(\rho_m^3)\\
\approx-\frac{k^2}{256\omega\mub}\sum_{m=1}^{M}\rho_m^3\chi(\mr_m)H_0^{(1)}(k|\mr_m-\ma_{n'}|)\left(\sum_{n=1,n\ne n'}^{N}H_0^{(1)}(k|\mr_m-\ma_{n}|)\overline{H_0^{(1)}(k|\mr-\ma_{n}|)}\right).
\end{multline}
Since $\mr_m\in\Omega$ and $k|\mr-\ma_n|\gg0.25$ for all $\mr\in\Omega$, substituting the Hankel function's asymptotic form
\[H_0^{(1)}(k|\mr-\ma_n|)=\frac{1+i}{4\sqrt{k\pi}}\frac{e^{ikR}}{\sqrt{R}}e^{-ik\vt_n\cdot\mr}+o\left(\frac{1}{\sqrt{kR}}\right)\]
yields
\begin{align}
\begin{aligned}\label{Term2}
\sum_{n=1,n\ne n'}^{N}H_0^{(1)}(k|\mr-\ma_{n}|)\overline{H_0^{(1)}(k|\mr-\ma_{n}|)}&=\sum_{n=1,n\ne n'}^{N}\frac{1}{8k\pi R}e^{-ik\vt_n\cdot\mr_m}e^{ik\vt_n\cdot\mr}+o\left(\frac{1}{\sqrt{kR}}\right)\\
&\approx\frac{1}{8k\pi R}\sum_{n=1}^{N}e^{ik\vt_n\cdot(\mr-\mr_m)}-\frac{1}{8k\pi R}e^{ik\vt_{n'}\cdot(\mr-\mr_m)}.
\end{aligned}
\end{align}
Since $\vt_n\cdot(\mr-\mr_m)=|\mr-\mr_m|\cos(\theta_n-\phi_m)$ and the Jacobi-Anger expansion 
\[e^{ix\cos\theta}=J_0(x)+\sum_{s\in\overline{\mathbb{Z}}_0}i^s J_s(x)e^{is\theta}\]
holds uniformly, we can obtain
\begin{align}
\begin{aligned}\label{Term3}
\sum_{n=1}^{N}e^{ik\vt_n\cdot(\mr-\mr_m)}&=\sum_{n=1}^{N}e^{ik|\mr-\mr_m|\cos(\theta_n-\phi_m)}=\sum_{n=1}^{N}\left(J_0(k|\mr-\mr_m|)+\sum_{s\in\overline{\mathbb{Z}}_0}i^s J_s(k|\mr-\mr_m|)e^{is(\theta_n-\phi_m)}\right)\\
&=NJ_0(k|\mr-\mr_m|)+\sum_{n=1}^{N}\sum_{s\in\overline{\mathbb{Z}}_0}i^s J_s(k|\mr-\mr_m|)e^{is(\theta_n-\phi_m)}
\end{aligned}
\end{align}
and, similarly,
\begin{equation}\label{Term4}
e^{ik\vt_{n'}\cdot(\mr-\mr_m)}=J_0(k|\mr-\mr_m|)+\sum_{s\in\overline{\mathbb{Z}}_0}i^s J_s(k|\mr-\mr_m|)e^{is(\theta_n'-\phi_m)}.
\end{equation}
Combining \eqref{Term3} and \eqref{Term4}, \eqref{Term2} can be rewritten as
\begin{multline}\label{Term5}
\sum_{n=1,n\ne n'}^{N}H_0^{(1)}(k|\mr-\ma_{n}|)\overline{H_0^{(1)}(k|\mr-\ma_{n}|)}\\
\approx\frac{N-1}{8k\pi R}\left(J_0(k|\mr-\mr_m|)+\frac{1}{N-1}\sum_{n=1,n\ne n'}^{N}\sum_{s\in\overline{\mathbb{Z}}_0}i^s J_s(k|\mr-\mr_m|)e^{is(\theta_n-\phi_m)}\right).
\end{multline}
Finally, substituting \eqref{Term5} into \eqref{Term1} leads us to
\begin{multline*}
\langle \mathrm{S}_{\mathrm{scat}}(n,n'),\mE_{\mathrm{inc}}(\mr,\ma_{n})\rangle_{L^2(\mathrm{A})}\approx-\frac{(N-1)k}{2048R\omega\mub\pi}\sum_{m=1}^{M}\rho_m^3\chi(\mr_m)H_0^{(1)}(k|\mr_m-\ma_{n'}|)\\\times\left(J_0(k|\mr-\mr_m|)+\frac{1}{N-1}\sum_{n=1,n\ne n'}^{N}\sum_{s\in\overline{\mathbb{Z}}_0}i^s J_s(k|\mr-\mr_m|)e^{is(\theta_n-\phi_m)}\right).
\end{multline*}
With this, applying H{\"o}lder's inequality,
\[\langle \mathrm{S}_{\mathrm{scat}}(n,n'),\mE_{\mathrm{inc}}(\mr,\ma_{n})\rangle_{L^2(\mathrm{A})}\leq||\mathrm{S}_{\mathrm{scat}}(n,n')||_{L^2(\mathrm{A})}||\mE_{\mathrm{inc}}(\mr,\ma_{n})||_{L^2(\mathrm{A})},\]
allows us to obtain \eqref{StructureDSM}, thereby completing the proof.
\end{proof}

\begin{rem}
The indicator function structure \eqref{StructureDSM} implies that the imaging performance is highly dependent on 1) the anomaly's size, permittivity, and conductivity; 2) the antenna configuration (total number and arrangement); and 3) the distance between the transmitter $\ma_{n'}$ and anomaly $\mD_m$. Observations 1 and 2 have also been made by other studies \cite{KLP,P-DSM1,P-DSM2}, but observation 3 is also a significant factor that affects imaging performance in real-world applications, as shown below by the experimental results for Example \ref{Ex1}.
\end{rem}

\section{Experimental results}\label{sec:4}
To evaluate this theoretical result and demonstrate the effectiveness and limitations of the designed DSM, we now present the results of experiments with real-world data. For these experiments, we placed $N=16$ dipole antennas in a circle of radius $0.09$m as follows:
\[\ma_n=0.09\mbox{m}\left(\cos\theta_n,\sin\theta_n\right),\quad\mbox{with}\quad\theta_n=\frac{3\pi}{2}-\frac{2\pi(n-1)}{N}.\]
The homogeneous background medium was filled with water, and its permittivity and conductivity were $\eps_\mathrm{b}=78$ and $\sigma_\mathrm{b}=0.2$S/m, respectively. The imaging region $\Omega$ was set to be the interior of a circle of radius $0.085$m, centered at the origin. The $S-$parameters $\mathrm{S}_{\mathrm{scat}}(n,n')$ were measured using a microwave machine developed by the Electronics and Telecommunications Research Institute (ETRI), and the incident field data $\mE_{\mathrm{inc}}(\ma_{n},\mr)$ used to calculate the indicator function $\mathfrak{F}_{\mathrm{DSM}}(\mr,n')$ via \eqref{IndicatorFunction}, were generated by CST STUDIO SUITE, as shown in Figure \ref{IncidentFieldCST}.

\begin{figure}[h]
\begin{center}
\includegraphics[width=0.37\textwidth]{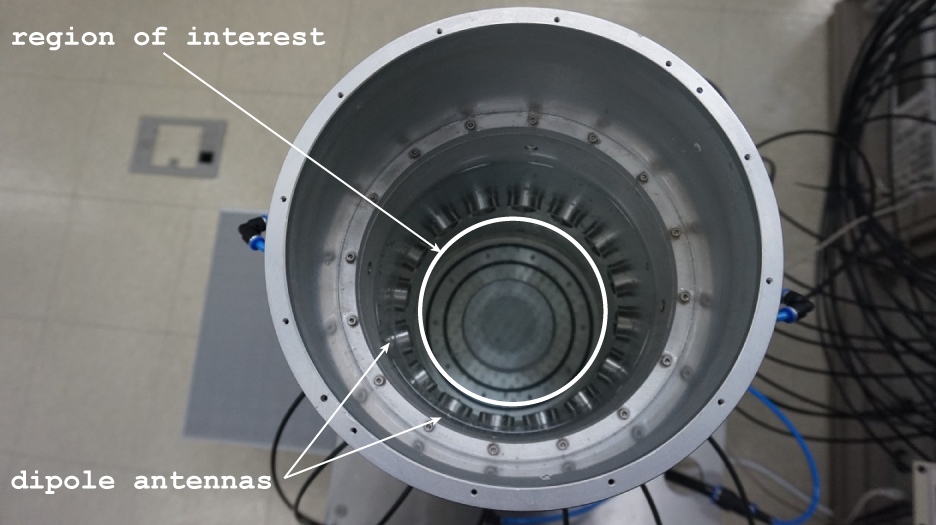}
\includegraphics[width=0.3\textwidth]{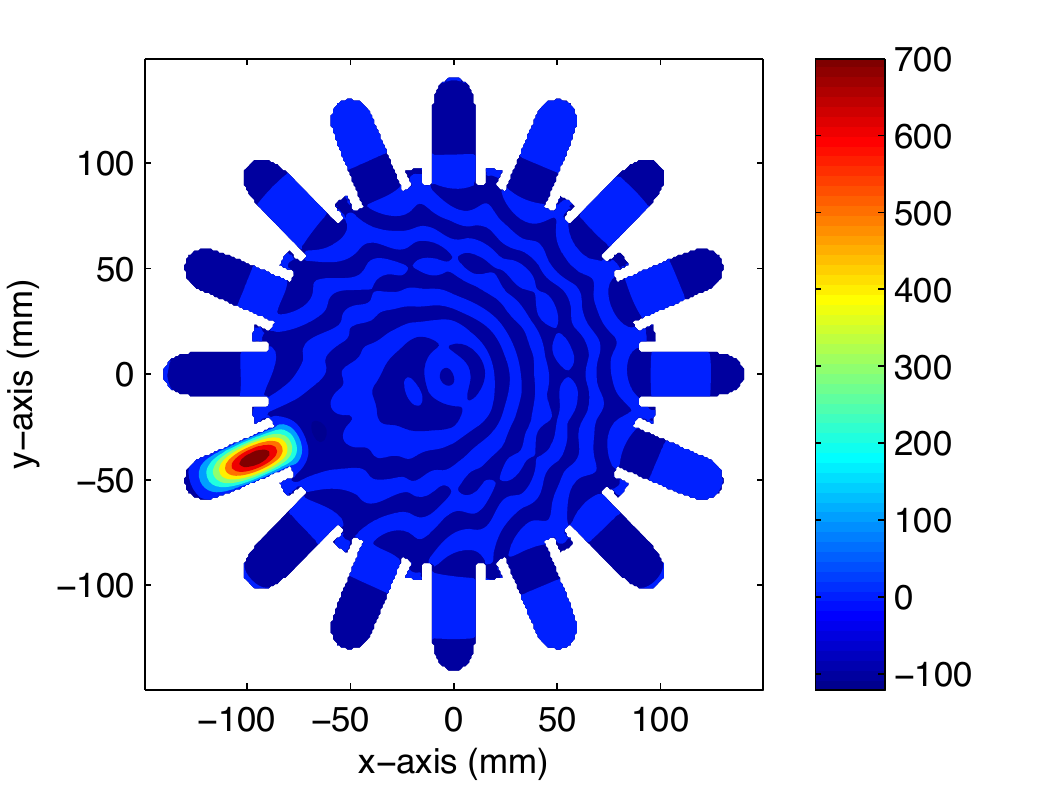}
\includegraphics[width=0.3\textwidth]{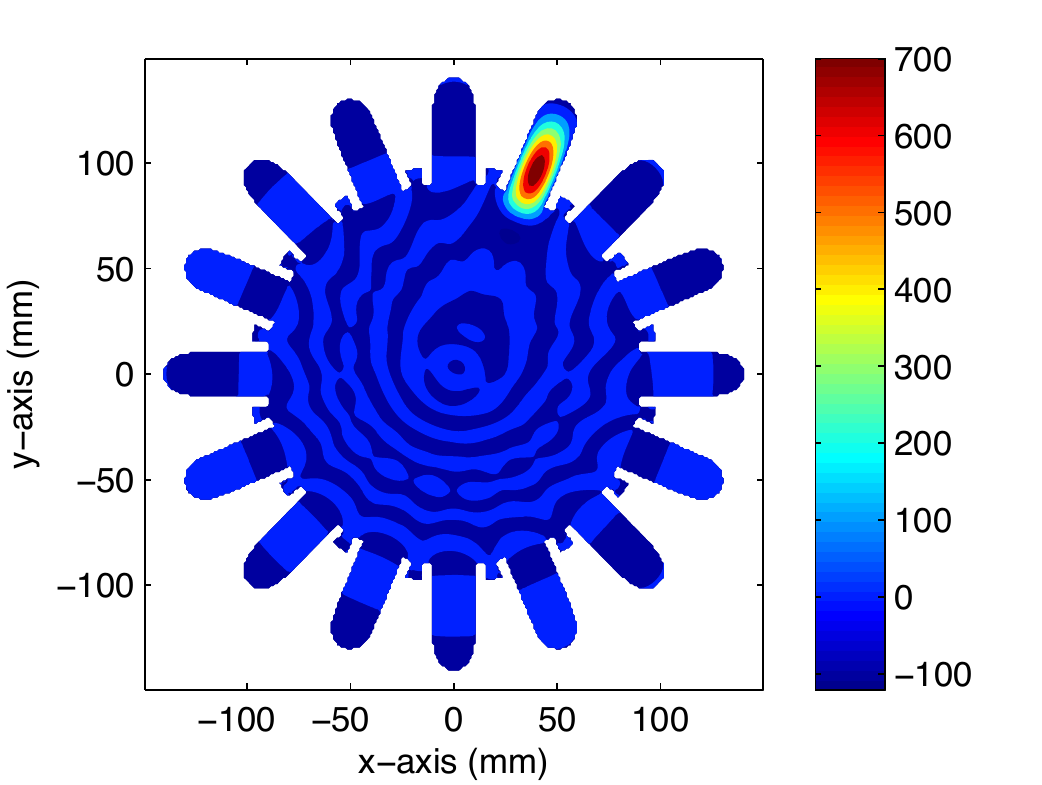}
\caption{\label{IncidentFieldCST}Microwave machine (left) and real part of $\mE_{\mathrm{inc}}(\ma_{n},\mr)$ for $n=4$ (center), and $n=10$ (right).}
\end{center}
\end{figure}

\begin{ex}\label{Ex1}
Here, we consider imaging two anomalies, $\mD_1$ and $\mD_2$, both of diameter $0.0064$m. Maps of $\mathfrak{F}_{\mathrm{DSM}}(\mr,n')$ for several different $n'$ are shown in Figure \ref{Result-Multiple}. Note that the locations of $\mD_1$ and $\mD_2$ can be identified, but some large artifacts also exist. It is also interesting to observe that $\mD_1$ cannot be identified when $n'=9$ and $n'=13$, i.e., when $\mD_1$ is not close enough to $\ma_{n'}$. While $\mD_1$ can be located when $n'=1$, $n'=2$, and $n'=5$, these provide the wrong location for $\mD_2$ because  $\mD_2$ is not close enough to $\ma_{n'}$.
\end{ex}

\begin{figure}[h]
\begin{center}
\includegraphics[width=0.325\textwidth]{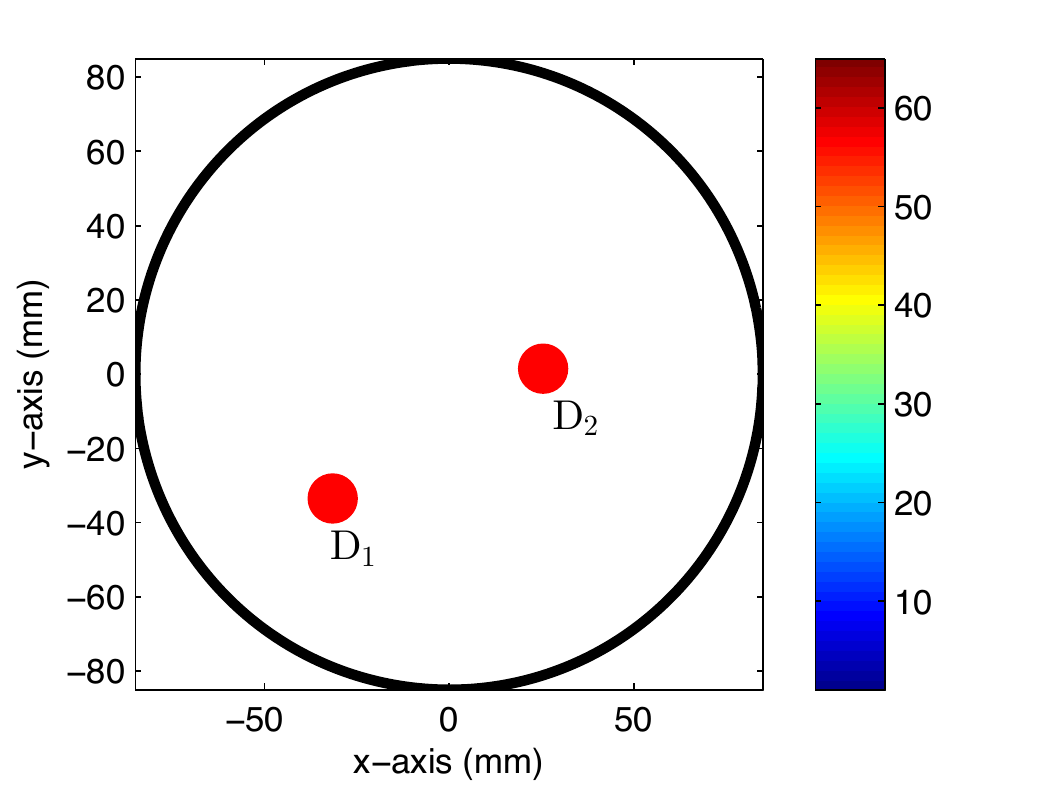}
\includegraphics[width=0.325\textwidth]{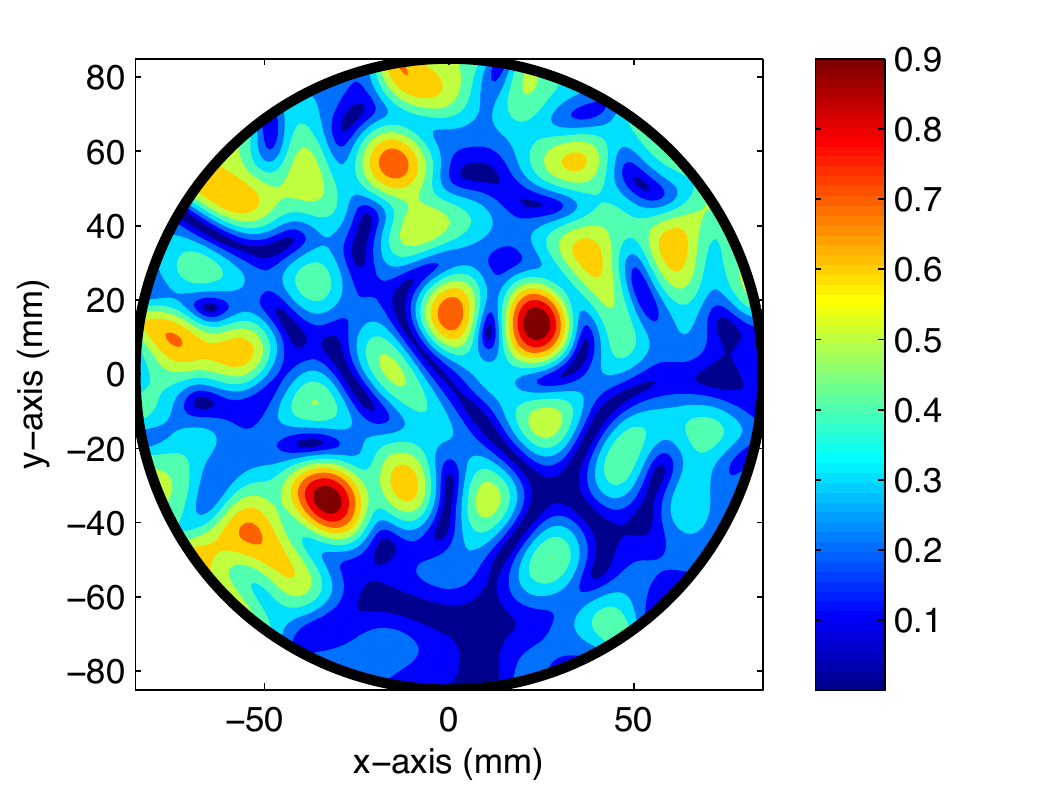}
\includegraphics[width=0.325\textwidth]{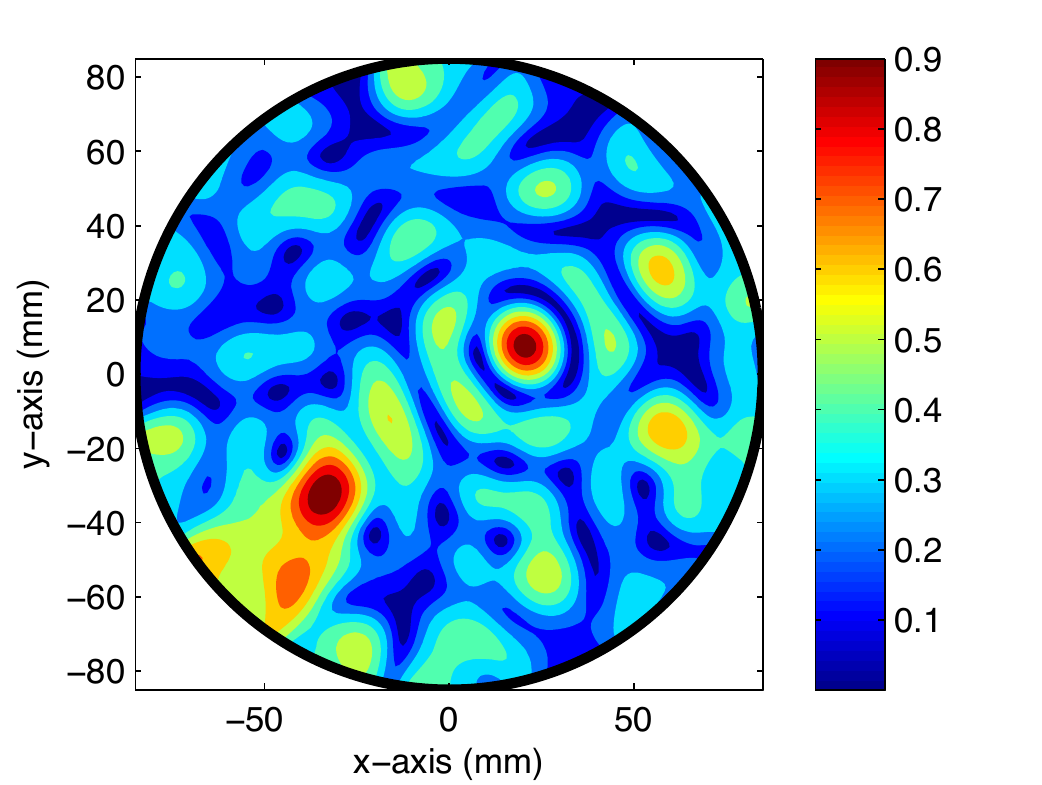}\\
\includegraphics[width=0.325\textwidth]{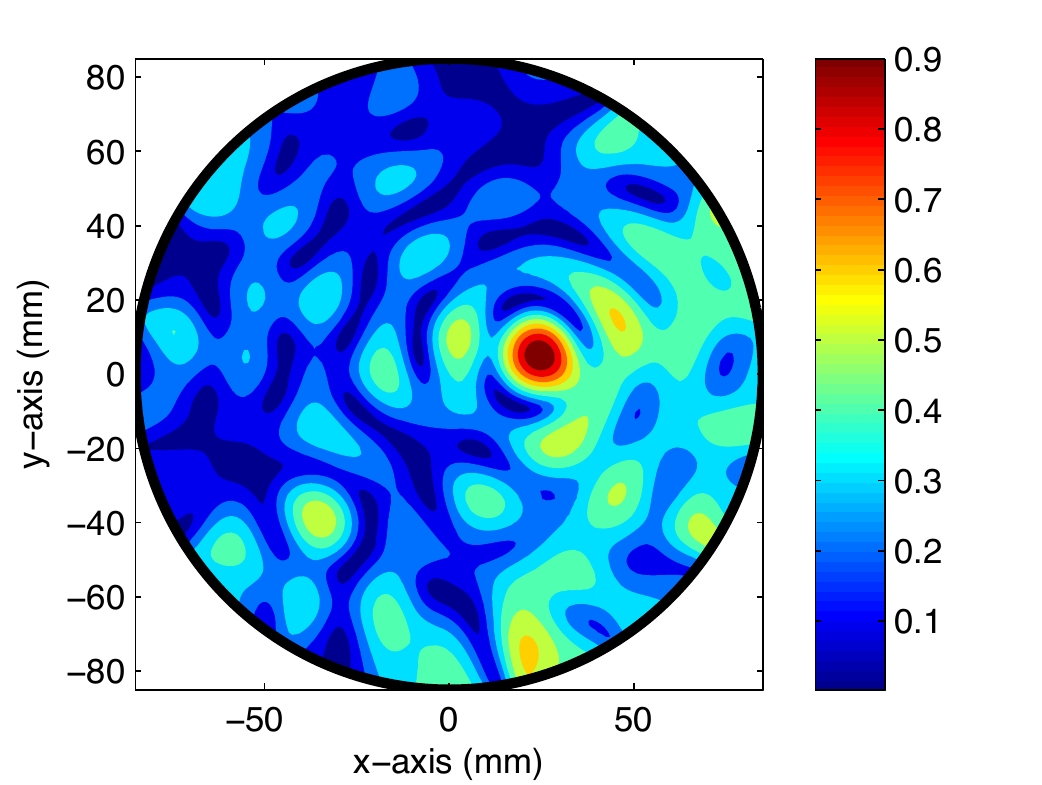}
\includegraphics[width=0.325\textwidth]{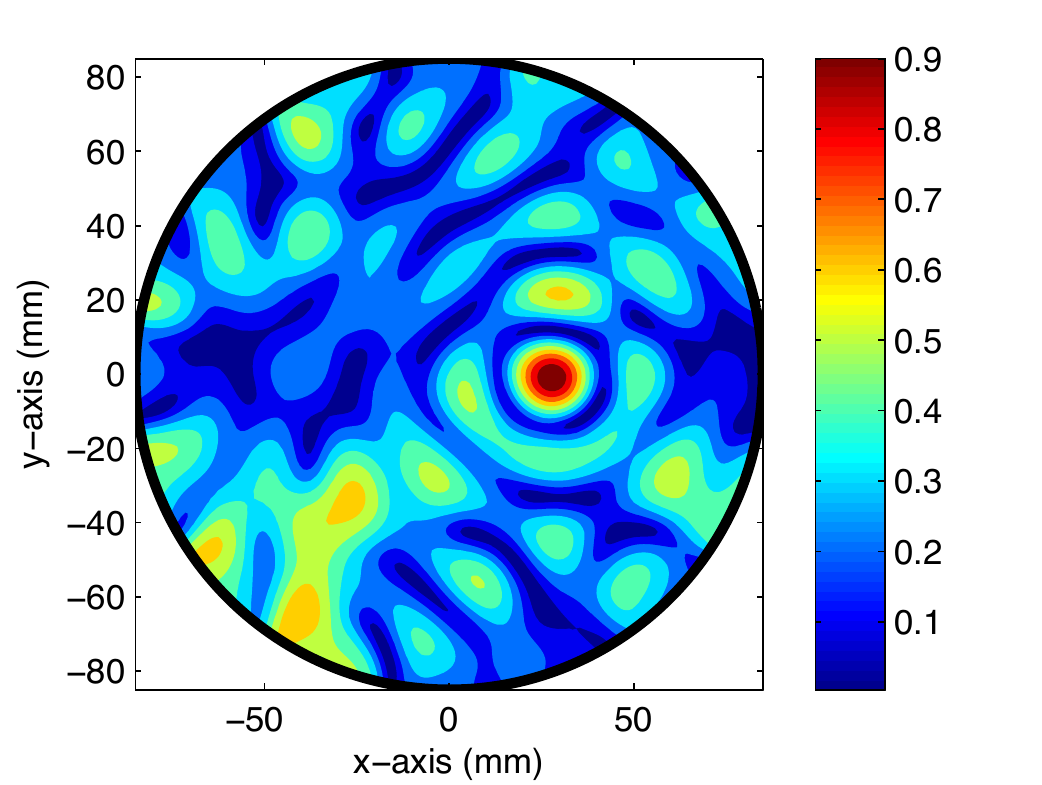}
\includegraphics[width=0.325\textwidth]{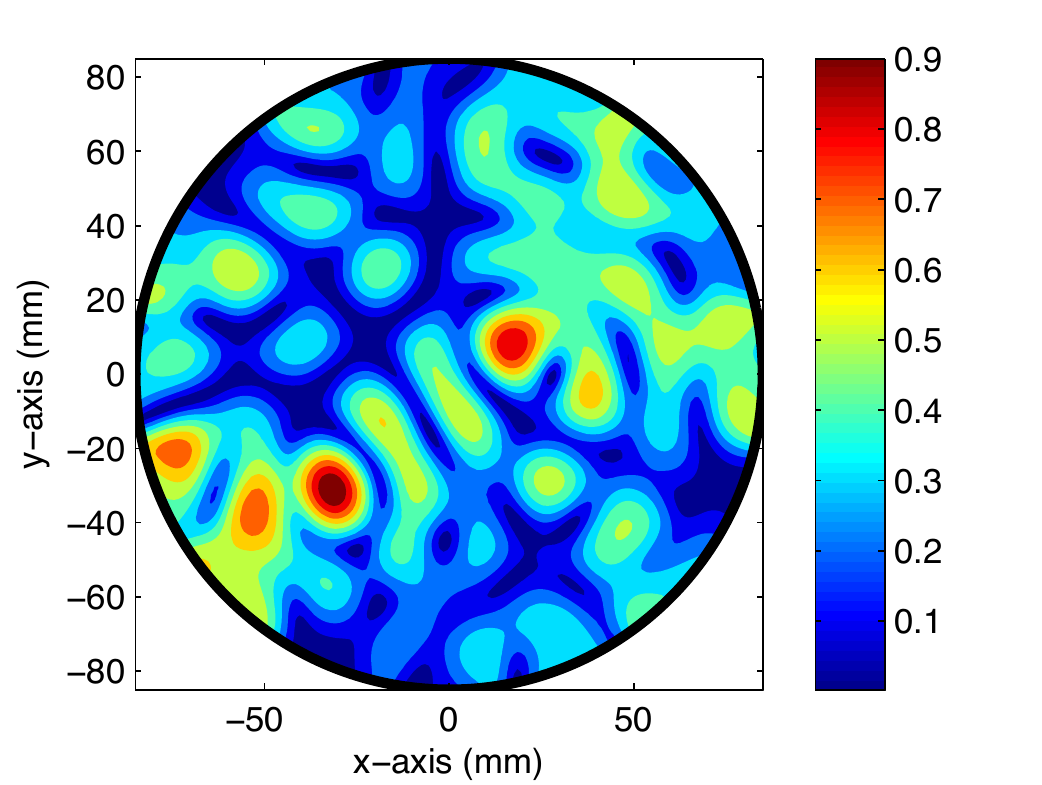}
\caption{\label{Result-Multiple}True locations of the anomalies (top left) and maps of $\mathfrak{F}_{\mathrm{DSM}}(\mr,n')$ for $n'=1$ (top center), $n'=5$ (top right), $n'=9$ (bottom left), $n'=13$ (bottom center), and $n'=2$ (bottom right).}
\end{center}
\end{figure}

\begin{ex}\label{Ex2}
As observed in Example \ref{Ex1}, the imaging performance is highly dependent on the location of the transducer $\ma_{n'}$. To identify all the anomalies in Example \ref{Ex1}, the following indicator function with multiple incident fields is adopted, introduced in \cite{IJZ1,LZ}. For each $\mr\in\Omega$,
  \[\mathfrak{F}_{\mathrm{MDSM}}(\mr):=\max_{n'}\set{\mathfrak{F}_{\mathrm{DSM}}(\mr,n')}.\]
Figure \ref{Result-Multiple-Incident} shows the maps of $\mathfrak{F}_{\mathrm{MDSM}}(\mr)$ for four ($n'=1,5,9,13$), eight ($n'=1,3,5,\cdots,15$), and sixteen ($n'=1,2,3,\cdots,16$) different incident fields. These results indicate that the location of $\mD_1$ cannot be identified with a small number of incident directions but can be identified when sufficient incident directions are used. However, some artifacts still appear in the map; therefore, further improvement is still needed.
\end{ex}

\begin{figure}[h]
\begin{center}
\includegraphics[width=0.325\textwidth]{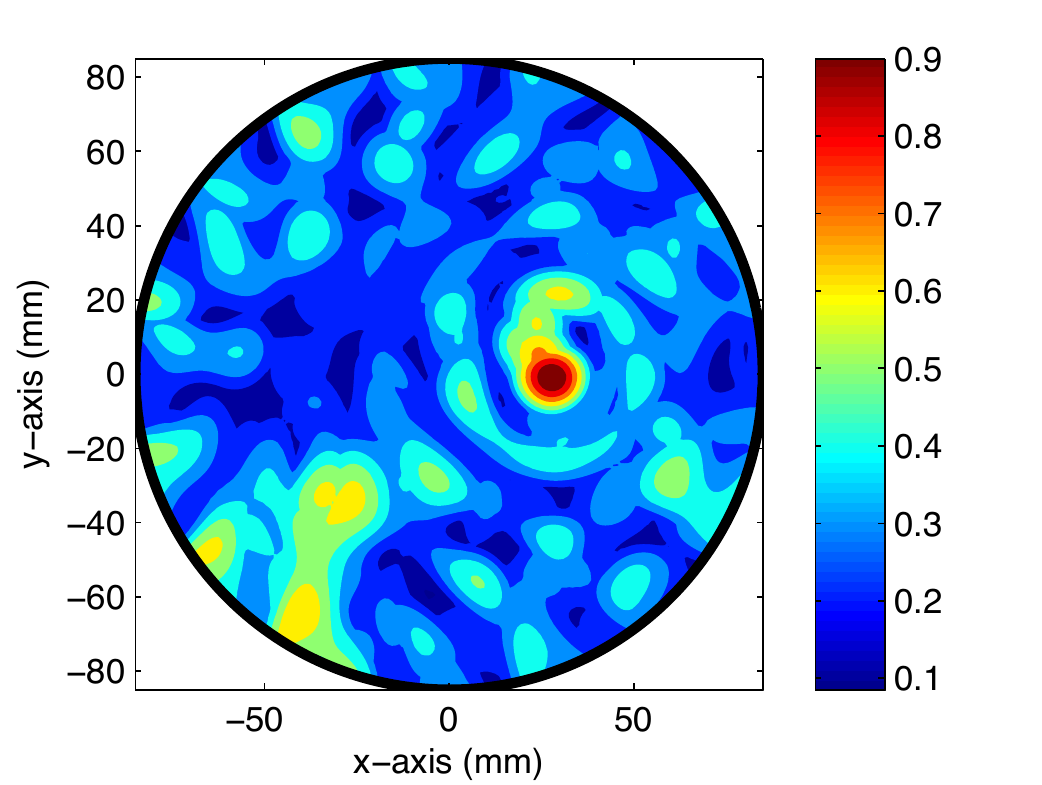}
\includegraphics[width=0.325\textwidth]{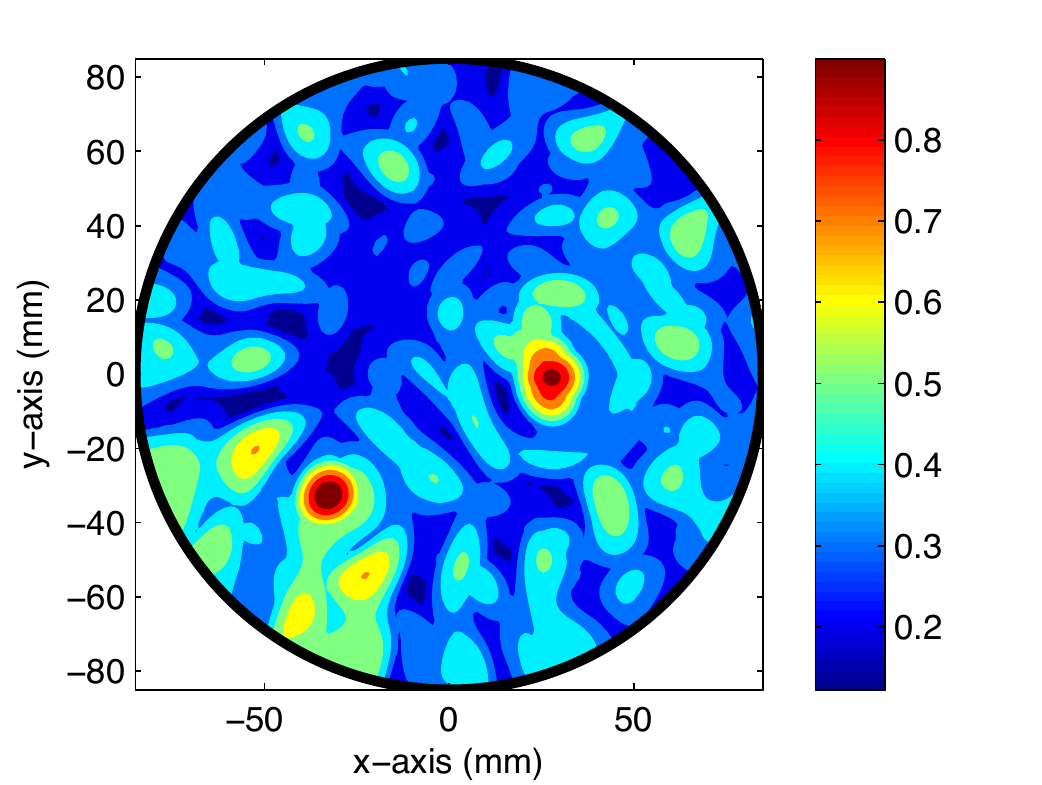}
\includegraphics[width=0.325\textwidth]{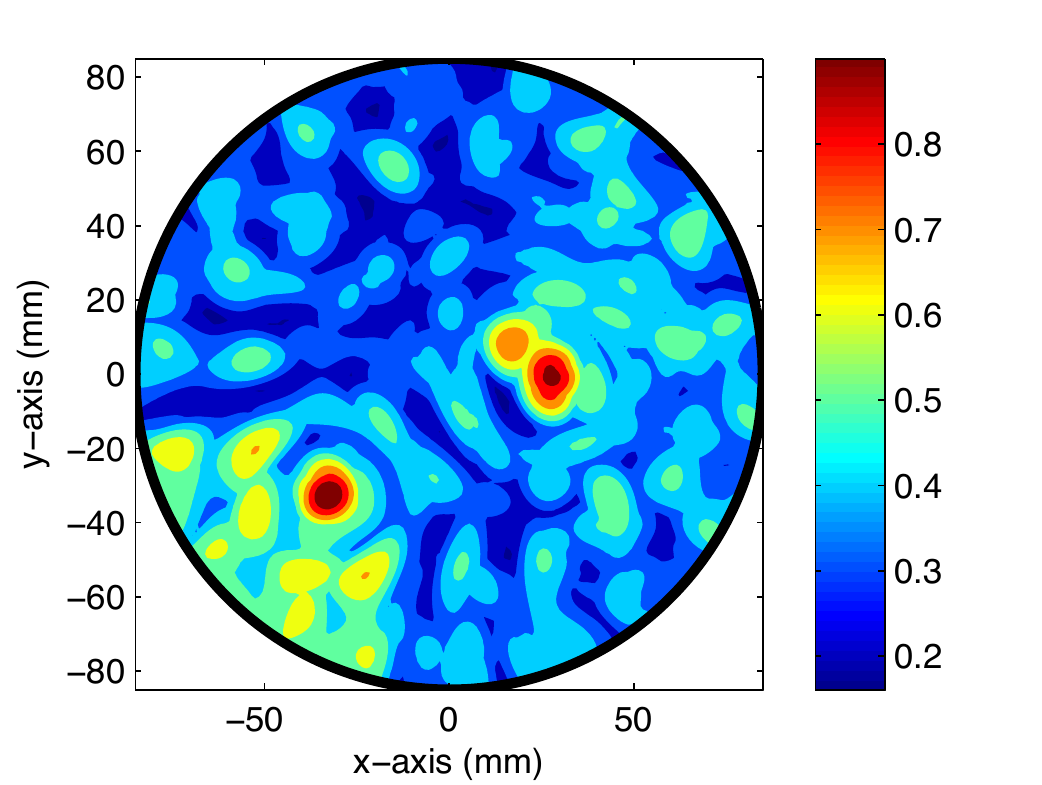}
\caption{\label{Result-Multiple-Incident}Maps of $\mathfrak{F}_{\mathrm{MDSM}}(\mr)$ for four (left), eight (center), and sixteen (right) different incident fields.}
\end{center}
\end{figure}

\section{Conclusions}\label{sec:5}
An indicator function of DSM is designed for identifying small anomalies from scattering parameters measured by a small number of dipole antennas. The experimental results with real-data indicate that such small anomalies can be identified via this DSM with only one or at most a few incident fields, which means that it can be regarded as a fast imaging technique for real-world microwave imaging. Unfortunately, these results also include some artifacts, and the imaging performance depended significantly on the transmitter location. This has motivated us to develop an improved DSM, and this will be the subject of future study.

\section*{Acknowledgements}
This research was supported by the Basic Science Research Program of the National Research Foundation of Korea (NRF) funded by the Ministry of Education (No. NRF-2017R1D1A1A09000547), the research program of Kookmin University in Korea, and Electronics and Telecommunications Research Institute (ETRI) grant funded by the Korean government (No. 18ZR1230).

\bibliographystyle{elsarticle-num-names}
\bibliography{../../../References}
\end{document}